\newcommand{\FF}{{\mathbb F}}
\newcommand{\CC}{{\mathbb C}}
\newsavebox{\savepar}
\begin{document}

\title{A note on the consistency of a system of $\star$-Sylvester equations}

\author{Fernando De Terán\thanks{Department of Mathematics, Universidad Carlos III de Madrid, Avenida de la Universidad 30, 28911, Legan\'es, Spain ({\tt fteran@math.uc3m.es})}}

\maketitle

\begin{abstract}
Let $\FF$ be a field with characteristic not $2$, and $A_i\in\FF^{m\times n},B_i\in\FF^{n\times m},C_i\in\FF^{m\times m}$, for $i=1,\hdots,\ell$. In this short note, we obtain necessary and sufficient conditions for the consistency of the system of $\star$-Sylvester equations $A_iX-X^\star B_i=C_i$, for $i=1,\hdots,\ell$, where $\star$ denotes either the transpose or, when $\FF=\CC$, the conjugate transpose.
\end{abstract}

\begin{keywords}
Sylvester equation, Roth's theorem, congruence, system of matrix equations.
\end{keywords}

\begin{AMS}
15A24, 15A30.
\end{AMS}


The {\em Sylvester equation} $AX-XB=C$, where $A,B,C$ are matrices of the appropriate size, is one of the matrix equations that most frequently appear in the literature. This equation has been widely studied, from both the theoretical and the numerical point of view. In particular, necessary and sufficient conditions for consistency are know for a long time \cite{roth}. These conditions are usually known as `Roth's Theorem', which states that $AX-XB=C$ is consistent if and only if there is a nonsingular matrix $S$ such that
\begin{equation}\label{consistency-sylv}
S\left[\begin{array}{cc}A&C\\0&B\end{array}\right]S^{-1}=\left[\begin{array}{cc}A&0\\0&B\end{array}\right]
\end{equation}

More recently, the $\star$-Sylvester equation $AX-X^\star B=C$ has become of interest, because of its connection with some applied problems, from areas like the theory of congruence orbits of matrices (when $B=-A$ and $C=0$), or the eigenvalue problem of palindromic matrix pencils (see, for instance, \cite{dd-ELA} and the references therein). In this equation, the notation $(\cdot)^\star$ stands for both the transpose of a matrix or, in the case of matrices over the complex field, the conjugate transpose. Necessary and sufficient conditions for consistency of the $\star$-Sylvester equation are also known \cite{dd-ELA,wimmer94}. More precisely, $AX-X^\star B=C$ is consistent if and only if there is a nonsingular matrix $S$ such that
\begin{equation}\label{consistency-star}
S\left[\begin{array}{cc}C&-A\\B&0\end{array}\right]S^\star=\left[\begin{array}{cc}0&-A\\B&0\end{array}\right].
\end{equation}

It is worth to emphasize that the necessary and sufficient conditions \eqref{consistency-sylv} and \eqref{consistency-star} look like very much to each other. The main difference between them is that the role played by the similarity transformation in the case of the Sylvester equation \eqref{consistency-sylv} is played by the congruence transformation in the $\star$-Sylvester equation \eqref{consistency-star}. For this reason, \eqref{consistency-star} can be also termed as `Roth's theorem' for the $\star$-Sylvester equation \cite{wimmer94}.

It is natural to consider, as an immediate generalization, instead of a single Sylvester (respectively, $\star$-Sylvester) equation, a system of equations $A_iX-XB_i=C_i$ (resp. $A_iX-X^\star B_i=C_i$), for $i=1,\hdots,\ell$. For these systems, it is immediate to get a sufficient condition for consistency, namely, that equation \eqref{consistency-sylv} (resp. \eqref{consistency-star}) holds after replacing $A,B,C$ by $A_i,B_i,C_i$, respectively, for all $i=1,\hdots,\ell$, and with the same matrix $S$ (namely, that all block-partitioned matrices are simultaneously similar or simultaneously congruent). This is because the solution of the system $X$ gives a particular $S$ for which \eqref{consistency-sylv} (resp. \eqref{consistency-star}) holds for all $A_i,B_i,C_i$ (see the implication ``(a) $\Rightarrow$ (b)" in the proof of Theorem \ref{main_th} for \eqref{consistency-star}). However, to show that this condition is necessary is not so immediate, and requires some additional effort. In particular, that this condition is necessary and sufficient for a system of Sylvester equations has been proved in \cite{guralnick} for matrices over commutative rings and, more recently, in \cite{lee-vu}  for matrices over fields, using a completely different approach, which is based on the proof of Theorem 2 in \cite{wimmer94}. In this short note we also follow the arguments in the proof of Theorem 2 in \cite{wimmer94} to extend the characterization of consistency \eqref{consistency-star} of a single $\star$-Sylvester equation to a system of $\star$-Sylvester equations. This characterization is stated in Theorem \ref{main_th}. In the following, $I_n$ denotes the $n\times n$ identity matrix. 

\begin{theorem}\label{main_th} Let $\FF$ be a field with characteristic not $2$, and $A_i\in\FF^{m\times n},B_i\in\FF^{n\times m},C_i\in\FF^{m\times m}$, for $i=1,\hdots,\ell$. Then, the following statements are equivalent:

\begin{itemize}

\item[{\rm(a)}] The system of $\star$-Sylvester equations
\begin{equation}\label{system}
\left\{\begin{array}{ccc}A_1X-X^\star B_1&=&C_1\\\vdots&&\vdots\\A_\ell X-X^\star B_\ell&=&C_\ell\end{array}\right.
\end{equation}
has a solution, $X\in\FF^{n\times m}$.

\item[{\rm(b)}] There is an invertible matrix $S\in\FF^{(m+n)\times(m+n)}$ such that
\begin{equation}\label{congruence}
S\left[\begin{array}{cc}C_i&-A_i\\B_i&0\end{array}\right]S^\star=\left[\begin{array}{cc}0&-A_i\\B_i&0\end{array}\right],\qquad\mbox{for all $i=1,\hdots,\ell$.}
\end{equation}
\end{itemize}

\end{theorem}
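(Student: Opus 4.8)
The plan is to prove the two implications separately, following the strategy of the single-equation case from Wimmer's Theorem 2, as the paper indicates.

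For the direction (a) $\Rightarrow$ (b), I would start from a solution $X$ of the system and exhibit an explicit congruence matrix $S$ that works \emph{simultaneously} for all $i$. The natural candidate, mirroring the single-equation proof, is the block matrix
\[
S=\left[\begin{array}{cc}I_m&X^\star\\0&I_n\end{array}\right]
\]
(or its inverse-transpose variant). The key point is that this $S$ does not depend on $i$: it is built only from the common solution $X$. I would then simply compute the congruence $S\,M_i\,S^\star$ where $M_i=\left[\begin{array}{cc}C_i&-A_i\\B_i&0\end{array}\right]$, and verify block-by-block that the $(1,1)$ block vanishes precisely because $A_iX-X^\star B_i=C_i$ holds. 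This is the routine calculation alluded to by the parenthetical remark in the excerpt, and I expect it to go through verbatim as in the $\ell=1$ case since each equation is handled independently by the same $S$.

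The substantive direction is (b) $\Rightarrow$ (a). Here I would begin with an invertible $S$ satisfying \eqref{congruence} for all $i$ and must produce a single $X$ solving every equation. The approach is to partition $S$ conformally as
\[
S=\left[\begin{array}{cc}S_{11}&S_{12}\\S_{21}&S_{22}\end{array}\right],
\]
expand the congruence condition \eqref{congruence} into its four block equations for each $i$, and then extract enough structural information about the blocks $S_{jk}$ to build the solution. Following Wimmer's argument, one expects that the block relations force certain blocks (for instance $S_{21}$, governing how the off-diagonal $-A_i,B_i$ data is preserved) to interact with the $A_i,B_i$ in a way that pins down an invertible relationship, and that the candidate solution emerges as $X=S_{22}^{-1}S_{21}$ or a similar combination, \emph{provided the relevant block is invertible}. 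The crucial subtlety, and the main obstacle, is the transition from the single equation to the system: the block equations now hold simultaneously for all $i$ but with the \emph{same} $S$, and I must verify that the $X$ extracted from $S$ (which a priori might only be guaranteed to solve one designated equation) in fact solves all of them at once.

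I expect the hard part to be exactly this simultaneity argument. In the single-equation proof one has freedom to normalize $S$ and reduce to a convenient form; with $\ell$ equations sharing one $S$, that normalization must be performed once and then shown compatible with every equation. The most likely mechanism is that the congruence condition, written out, yields for each $i$ an identity of the form $S_{11}C_iS_{11}^\star+(\text{terms in }A_i,B_i)=0$ together with relations forcing the $(1,2)$ and $(2,1)$ blocks of the transformed matrix to reproduce $-A_i,B_i$ respectively; these latter relations are independent of $C_i$ and hence constrain $S$ uniformly across all $i$, while the characteristic-not-$2$ hypothesis enters (as in the single-equation case) to solve for the symmetric/antisymmetric part cleanly and to recover $X$ from $S$. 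Once the blocks of $S$ are shown to satisfy the $i$-independent relations, substituting back into each $i$-th block equation should yield $A_iX-X^\star B_i=C_i$ for every $i$ with the same $X$, completing the equivalence.
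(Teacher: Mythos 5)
Your proof of (a) $\Rightarrow$ (b) is correct and coincides with the paper's: the congruence matrix built from the common solution $X$ does not depend on $i$, and the $(1,1)$ block of $SM_iS^\star$ vanishes exactly when the $i$-th equation holds, so the single-equation computation applies verbatim to each $i$. The direction (b) $\Rightarrow$ (a), however, has a genuine gap. Your plan --- partition $S$ into blocks $S_{jk}$, expand \eqref{congruence} into block equations, and extract the solution as $X=S_{22}^{-1}S_{21}$ or a similar combination --- is not Wimmer's argument and cannot be made to work. Nothing in hypothesis (b) forces any block of $S$ to be invertible, and no formula in the blocks of $S$ can produce the solution, because $S$ is far from unique: if $S$ satisfies \eqref{congruence}, so does $TS$ for every $T$ in the (generally large) group of matrices with $TN_iT^\star=N_i$ for all $i$, where $N_i$ denotes the right-hand side of \eqref{congruence}; the blocks of $S$ therefore carry arbitrary data unrelated to $X$. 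This is the same obstruction one meets in the classical Roth theorem, where the solution of $AX-XB=C$ cannot be read off the similarity matrix in \eqref{consistency-sylv}. The characteristic-not-$2$ hypothesis also does not enter through any symmetric/antisymmetric normalization of $S$, as you suggest.

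What the paper (following Wimmer) actually does is a dimension count on auxiliary solution spaces. Writing $M_i$ and $N_i$ for the left- and right-hand block matrices in \eqref{congruence}, one considers the space $D$ of pairs $(U,W)$ satisfying $N_iU+WM_i=0$ and $U^\star N_i+M_iW^\star=0$ for all $i$, together with its homogeneous analogue $D_0$ (the same equations with every $C_i$ replaced by $0$). A pair $(U,W)\in D$ whose block $W_{11}$ equals $I_m$ yields the solution $X=\frac{1}{2}(U_{21}+W_{12}^\star)$; this averaging of the two ``halves'' is where the characteristic-not-$2$ hypothesis is used. The existence of such a pair is obtained by comparing the linear map $\varphi(U,W)=\left[\begin{array}{c}W_{11}\\W_{21}\end{array}\right]$ restricted to $D$ and to $D_0$: one shows $\dim D=\dim D_0$ via the bijection $(U,W)\mapsto(US^\star,WS^{-1})$ from $D$ onto $D_0$ (the only place where $S$ is used), that the two restrictions have the same kernel and nested images, hence equal images, and that $(-I_{m+n},I_{m+n})\in D_0$ maps to the target $\left[\begin{array}{c}I_m\\0\end{array}\right]$. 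The simultaneity over all $i$ is handled simply by intersecting the spaces $D(A_i,B_i,C_i)$ over $i$, since all four claims are stable under intersection. Your proposal contains none of this mechanism, and without it the implication (b) $\Rightarrow$ (a) remains unproven.
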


\begin{proof} (a) $\Rightarrow$ (b): If there is an $X\in\FF^{n\times m}$ such that \eqref{system} holds, then set
$$
S=\left[\begin{array}{cc}I_n&X^\star\\0&I_m\end{array}\right].
$$
Note that $S$ is invertible. It is straightforward to check that \eqref{congruence} holds for this $S$.

(b) $\Rightarrow$ (a): The proof follows the one of ``(c) $\Rightarrow$ (a)" in Theorem 2 in \cite{wimmer94}, with the appropriate changes. 

Let us define the following vector subspaces of $\FF^{(m+n)\times(m+n)}\times \FF^{(m+n)\times(m+n)}$:
$$
\begin{array}{c}
\Gamma(A_i,B_i,C_i):=\left\{(U,W)\ :\ \left[\begin{array}{cc}0&-A_i\\B_i&0\end{array}\right]U+W\left[\begin{array}{cc}C_i&-A_i\\B_i&0\end{array}\right]=0\right\},
\\
\Delta(A_i,B_i,C_i):=\left\{(U,W)\ :\ U^\star\left[\begin{array}{cc}0&-A_i\\B_i&0\end{array}\right]+\left[\begin{array}{cc}C_i&-A_i\\B_i&0\end{array}\right]W^\star=0 \right\},
\end{array}
$$
and $D(A_i,B_i,C_i):=\Gamma(A_i,B_i,C_i)\cap\Delta(A_i,B_i,C_i).$ Let us partition
\begin{equation}\label{partition}
U=\left[\begin{array}{cc}U_{11}&U_{12}\\U_{21}&U_{22}\end{array}\right],\qquad\mbox{and}\qquad
W=\left[\begin{array}{cc}W_{11}&W_{12}\\W_{21}&W_{22}\end{array}\right],
\end{equation}
where $U_{11},W_{11}\in\FF^{m\times m}$, and $U_{22},W_{22}\in\FF^{n\times n}$. Then 
\begin{equation}\label{set1}
(U,W)\in\Gamma(A_i,B_i,C_i)\Leftrightarrow
\left\{\begin{array}{c}A_iU_{21}-W_{12}B_i-W_{11}C_i=0\\B_iU_{11}+W_{22}B_i+W_{21}C_i=0\\A_iU_{22}+W_{11}A_i=0\\B_iU_{12}-W_{21}A_i=0\end{array}\right.,
\end{equation}
and
\begin{equation}\label{set2}
(U,W)\in\Delta(A_i,B_i,C_i)\Leftrightarrow
\left\{\begin{array}{c}A_iW_{12}^\star-U_{21}^\star B_i-C_iW_{11}^\star=0\\U_{11}^\star A_i+A_iW_{22}^\star-C_iW_{21}^\star=0\\U_{22}^\star B_i+B_iW_{11}^\star=0\\U_{12}^\star A_i-B_iW_{21}^\star=0\end{array}\right..
\end{equation}

Now, the problem reduces to prove that there is a pair $(U,W)\in D:=\bigcap_{i=1}^\ell D(A_i,B_i,C_i) $ with $W_{11}=I_m$. Note that, for such a pair, the first identities in \eqref{set1} and \eqref{set2} give
$$
\begin{array}{c}
A_iU_{21}-W_{12}B_i=C_i\\
A_iW_{12}^\star-U_{21}^\star B_i=C_i,
\end{array}\qquad\mbox{for $i=1,\hdots,\ell$},
$$ 
so, adding up, we get $A_i(U_{21}+W_{12}^\star)-(U_{21}+W_{12}^\star)^\star B_i=2C_i$, hence $X=\frac{1}{2}(U_{21}+W_{12}^\star)$ is a solution of \eqref{system}.

Using the partition \eqref{partition}, let us define the following linear map:
$$
\begin{array}{cccc}
\varphi:&\FF^{(m+n)\times(m+n)}\times\FF^{(m+n)\times(m+n)}&\longrightarrow &\FF^{(m+n)\times(n)}\\
&(U,W)&\mapsto&\left[\begin{array}{c}W_{11}\\W_{21}\end{array}\right].
\end{array}
$$
Then, it suffices to prove that 
\begin{equation}\label{image}
\left[\begin{array}{c}I_m\\0\end{array}\right]\in\varphi(D).
\end{equation}

Now, set $\widehat\varphi:=\varphi|_D$ and $\varphi_0:=\varphi|_{D_0}$, where $D_0:=\bigcap_{i=1}^\ell D(A_i,B_i,0)$. Let us assume that the following four claims hold:
\begin{itemize}

\item[(i)] $\dim D=\dim D_0$.

\item[(ii)] Ker$\,\widehat \varphi=$ Ker$\,\varphi_0$.

\item[(iii)] Im$\,\widehat \varphi\subseteq$  Im$\,\varphi_0$.

\item[(iv)] $\left[\begin{array}{c}I_m\\0\end{array}\right]\in\varphi(D_0).$
\end{itemize}

Form (i)--(ii) and the identities:
$$
\begin{array}{c}
\dim \mbox{\rm Ker}\, \widehat \varphi+\dim\mbox{\rm Ker}\,\widehat\varphi=\dim D,\\
\dim \mbox{\rm Ker}\, \varphi_0+\dim\mbox{\rm Ker}\,\varphi_0=\dim D_0,
\end{array}
$$
it follows that $\dim\mbox{\rm Im}\,\widehat \varphi=\dim\mbox{\rm Im}\,\varphi_0$. Then (iii) implies $\mbox{\rm Im}\,\widehat\varphi=\mbox{\rm Im}\,\varphi_0$, and (iv) implies \eqref{image}. Hence, it remains to prove (i)--(iv).

\begin{itemize}

\item Proof of (iv): For this, just notice that $(-I_{m+n},I_{m+n})\in D_0$, and that $\varphi(-I_{m+n},I_{m+n})=\left[\begin{array}{c}I_m\\0\end{array}\right]$. 

\item Proof of (ii): Just notice that the coefficients of $C_i$ in both \eqref{set1} and \eqref{set2} are $W_{11}$ or $W_{21}$, which are precisely the ones appearing in $\varphi(U,W)$.

\item Proof of (iii): If we set 
$$
\widetilde U=\left[\begin{array}{cc}0&U_{12}\\0&U_{22}\end{array}\right]\qquad\mbox{and}\qquad
\widetilde W=\left[\begin{array}{cc}W_{11}&0\\W_{21}&0\end{array}\right],
$$
then, as can be seen by looking at \eqref{set1} and \eqref{set2}, $(U,W)\in D$ implies $(\widetilde U,\widetilde W)\in D_0$.

\item Proof of (i): Let $S\in\FF^{(m+n)\times(m+n)}$ satisfying \eqref{congruence}. Then $(U,W)\in D$ if and only if $(US^\star,WS^{-1})\in D_0$. To prove this, notice that:

$$
\begin{array}{cl}(US^\star,WS^{-1})\in\Gamma(A_i,B_i,0)&\Leftrightarrow\left[\begin{array}{cc}0&-A_i\\B_i&0\end{array}\right]US^\star+WS^{-1}\left[\begin{array}{cc}0&-A_i\\B_i&0\end{array}\right]=0
\\ &\Leftrightarrow\left[\begin{array}{cc}0&-A_i\\B_i&0\end{array}\right]US^\star+W\left[\begin{array}{cc}C_i&-A_i\\B_i&0\end{array}\right]S^\star=0\\
&\Leftrightarrow \left[\begin{array}{cc}0&-A_i\\B_i&0\end{array}\right]U+W\left[\begin{array}{cc}C_i&-A_i\\B_i&0\end{array}\right]=0\\&\Leftrightarrow (U,W)\in\Gamma(A_i,B_i,C_i).\end{array}
$$ 

$$
\begin{array}{cl}(US^\star,WS^{-1})\in\Delta(A_i,B_i,0)&\Leftrightarrow SU^\star \left[\begin{array}{cc}0&-A_i\\B_i&0\end{array}\right]+\left[\begin{array}{cc}0&-A_i\\B_i&0\end{array}\right](S^{-1})^\star W^\star=0
\\ &\Leftrightarrow SU^\star\left[\begin{array}{cc}0&-A_i\\B_i&0\end{array}\right]+S\left[\begin{array}{cc}C_i&-A_i\\B_i&0\end{array}\right]W^\star=0\\
&\Leftrightarrow U^\star\left[\begin{array}{cc}0&-A_i\\B_i&0\end{array}\right]+\left[\begin{array}{cc}C_i&-A_i\\B_i&0\end{array}\right]W^\star=0\\&\Leftrightarrow (U,W)\in\Delta(A_i,B_i,C_i).\end{array}
$$ 

\end{itemize}
\end{proof}

\end{document}